\newcommand{\bb}[1]{\mathbb{#1}}
\newcommand{\cl}[1]{\mathcal{#1}}
\newcommand{\R}{\mathbb{R}}
\newcommand{\C}{\mathbb{C}}
\newcommand{\F}{\mathbb{F}}
\newcommand{\spc}[1]{\mathcal{#1}}
\newcommand{\Span}{{\operatorname{span}}}
\def\>{\rangle}
\def\<{\langle}
\newcommand{\map}[1]{\mathcal{#1}}
\newcommand{\id}{{\operatorname{id}}}
\newcommand{\Sym}{{\operatorname{Sym}}}
\newcommand{\AND}{\text{ and }}
\newcommand{\hS}{{\widehat {\map S} }}
\newenvironment{spmatrix}{\left(\begin{smallmatrix}}{\end{smallmatrix}\right)}
\theoremstyle{plain}
\newtheorem{thm}{Theorem}[section]
\newtheorem{theo}[thm]{Theorem}
\newtheorem{cor}[thm]{Corollary}
\newtheorem{prop}[thm]{Proposition}
\newtheorem{lemma}[thm]{Lemma}
\theoremstyle{definition}
\begin{document}

\title[Counterexamples to extendibility]{Counterexamples to the extendibility\\ of positive  unital norm-one maps}

\author[G.~Chiribella]{Giulio Chiribella} 
\address{Department of Computer Science, The University of Hong Kong, Pok Fu Lam Road, Hong Kong  and  
Department of Computer Science, University of Oxford, Wolfson Building, Parks Road, Oxford, UK}
\email{giulio.chiribella@cs.ox.ac.uk}

\author[K.R.~Davidson]{Kenneth R. Davidson}
\address{Department of Pure Mathematics, University of Waterloo, Waterloo, ON, Canada N2L 3G1}
\email{krdavidson@uwaterloo.ca}

\author[V.I.~Paulsen]{Vern I.~Paulsen}
\address{Institute for Quantum Computing and Department of Pure Mathematics, University of Waterloo,
Waterloo, Waterloo, ON, Canada N2L 3G1}
\email{vpaulsen@uwaterloo.ca}

\author[M.~Rahaman]{Mizanur Rahaman}
\address{Univ Lyon, ENS Lyon, UCBL, CNRS, Inria, LIP, F-69342, Lyon Cedex 07, France}
\email{mizanur.rahaman@ens-lyon.fr}

 \begin{abstract}
 Arveson's extension theorem guarantees that every completely positive map defined on an operator system  
 can  be extended to a completely positive map defined on the whole C*-algebra containing it. 
 An analogous statement where complete positivity is replaced by positivity is known to be false.   
 A natural question is whether  extendibility could still hold for positive maps satisfying stronger conditions, 
 such as being unital and norm 1.    
 Here we provide three counterexamples showing that  positive norm-one unital maps defined on an operator subsystem 
 of a matrix algebra cannot be extended to a positive map on the full matrix algebra.  
 The first counterexample is an unextendible positive unital map  with unit norm, 
 the second counterexample is an unextendible positive unital isometry on a real operator space,
 and the third counterexample is an unextendible positive unital isometry on a complex operator space.  
\end{abstract}

\maketitle

\section{Introduction}

Arveson's Extension Theorem \cite{arveson69} is a central result in the theory of operator algebras. 
It states that if $\mathcal{A}$ is a C$^*$-algebra,  $\mathcal{S}$ an operator system contained in $\mathcal{A}$, 
and $\Phi:\mathcal{S}\rightarrow B(\mathcal{H})$ is a completely positive map from $\mathcal{S}$ to the 
bounded linear operators on a Hilbert space $\mathcal{H}$, then there exists a completely positive map 
$\Psi:\mathcal{A}\rightarrow B(\mathcal{H})$ extending the map $\Phi$ to the whole algebra $\cl A$. 
A natural question is whether some analogue of Arveson's theorem holds for maps that are positive but not completely positive.   
The most immediate analogue fails to hold:  already in \cite{arveson69}, Arveson provided a  counterexample 
showing that the mere positivity of $\Phi$ is not sufficient for it to have a positive extension. 
His example (see also \cite[Examples 2.2, 2.13]{paulsenbook}) is the map on $\cl S = \Span\{ 1, z, \bar z \} \subset C(\bb T)$,
where $C(\bb T)$ is the C$^*$-algebra of continuous functions on the unit circle $\bb T$, $z$ is the coordinate function, and $\Phi:\mathcal{S}\rightarrow M_2(\C)$ is given by 
\[
 \Phi(a+bz +c\bar{z}) =  \begin{pmatrix}  a  & 2b  \\  2c  &  a   \end{pmatrix}  . 
\]
In this counterexample, the obstruction to extendibility of the map $\Phi$ appears to be its norm.  
A result by  Russo and Dye  (Corollary 2.9 in \cite{paulsenbook}) guarantees that a positive map on a unital C$^*$-algebra attains its norm at the identity. 
Hence, every extendible unital map should have norm $1$. 
The map $\Phi$ is unital and has norm $2$, and therefore it cannot be extended.

Arveson's counterexample stimulates the question whether every unital positive map with unit norm admits a positive extension.  
In this article we present three counterexamples to the above claim.  
Our first counterexample is a positive unital map $\Phi_n$, defined on an operator system of $M_{2n}  (\C)$, with the property that $\|  \Phi_n\|=1$ 
but $\Phi_n$ does not admit any positive extension to $M_{2n}  (\C)$ for every $n  > 16$.  
This counterexample applies to an early statement, made by St\o rmer in \cite[Theorem 4] {stormerarxiv}, 
and later amended in the published version \cite{stormer-pub}.   
This counterexample was cited as the third reference of  the published version \cite{stormer-pub}, but was not publicly available previously.

Our second counterexample is a positive isometry $\Upsilon_n$  defined on a self-adjoint subspace $\spc S_n$ containing the identity of $M_{2n}  (\R)$, 
with the property that $\Upsilon_n$ is unextendible for every $n\ge 2$.  This example should be compared with \cite[Theorem 1]{stormer-pub}, which provides a sufficient condition for extendibility in terms of a notion of  $\spc C$-positivity,  $\spc C$ being a cone of linear maps.  Specifically, the theorem states that every $\spc C$-positive map on a real operator system can be extended to a $\spc C$-positive map on the whole algebra.
When $\spc C$ is the cone of all positive maps,  $\spc C$-positivity of a map defined on the whole algebra is equivalent to positivity.   
Our  example   shows that  $\spc C$-positivity of a map defined on  the real operator system $\spc S_n$ 
is a strictly stronger property than positivity of the map on $\spc S_n$.  

Finally, our third counterexample is a positive isometry $\Gamma_n$  defined on an operator system of $M_{2n}  (\C)$, 
with the property that $\Gamma_n$ is unextendible for every $n\ge 2$. 

Although our third counterexample is in some sense stronger than the first
we present the first for historical reasons.  
The second is used to establish parts of the third example.
Also, the relative dimensions of the operator systems in the first and third examples are quite different. 
 In the first example, the dimension of the operator system is roughly 1/2 the dimension of the containing matrix algebra, 
while  in the third counterexample  the dimension of the operator system is roughly 1/4 the dimension of the containing matrix algebra.

An interesting question for future research is whether there are bounds on how large  the fractional dimension of an operator system must be in order for 
non-extendible positive maps of the types that we construct to exist.

\section{An unextendible positive unital map with unit norm} 

\begin{theo}\label{davidson-paulsen ex}
Consider the operator system 
\begin{align*} 
\spc A_n :  = \left\{ \begin{pmatrix}  a  I_n   & B \\    C &  d  I_n  \end{pmatrix} : a,d \in \C ,\  B,C  \in M_n  (  \C)  \right\} \subseteq M_2(M_n(\C))  , 
\end{align*}
and define a map $\Phi_n: \spc A_n\rightarrow M_{2n}(\C)$ by 
\begin{align*}
  \Phi_n   \begin{pmatrix}   a  I_n   & B  \\   C &  d  I_n        \end{pmatrix}  
  =  \begin{pmatrix}   a I_n   & \frac{1}{4}{B}^{t}  \\   \frac{1}{4} {C}^{t}  &  d I_n  \end{pmatrix} , 
\end{align*}
where $B^t$ denotes the transpose of B. 
Then $\Phi_n$ is a unital, positive map with \mbox{$\|\Phi_n\|=1$}, 
that does not admit a positive extension to $M_{2n}(\C)$ for $n>16$.
\end{theo}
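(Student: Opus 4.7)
The plan is to verify the easy properties first, then suppose for contradiction that a positive extension $\Psi:M_{2n}(\C)\to M_{2n}(\C)$ exists and exploit the operator-system constraint $\Psi|_{\spc A_n}=\Phi_n$ together with the rank-one positivity of $\Psi$ to derive a pointwise inequality that fails quantitatively once $n>16$. Positivity of $\Phi_n$ itself follows from the standard characterization that $\begin{pmatrix}aI_n & B\\ B^*&dI_n\end{pmatrix}\geq 0$ is equivalent to $a,d\geq 0$ and $\|B\|\leq\sqrt{ad}$, combined with $\|B^t\|=\|B\|$ and the contractive factor $1/4$. Unitality is immediate, and the Russo--Dye corollary cited in the introduction then yields $\|\Phi_n\|=\|\Phi_n(I)\|=1$.

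\emph{Structural step.} Assume $\Psi$ is a positive extension; it is automatically unital. Identify $M_{2n}(\C)=M_2(\C)\otimes M_n(\C)$ with the standard matrix units $\{E_{ij}\}$ of $M_2(\C)$. From $\Psi(E_{ii}\otimes I_n)=E_{ii}\otimes I_n$ for $i=1,2$ and the order sandwich $0\leq E_{ii}\otimes A\leq E_{ii}\otimes I_n$ for $0\leq A\leq I_n$, positivity of $\Psi$ forces $\Psi(E_{ii}\otimes A)$ to lie in the positive cone below $E_{ii}\otimes I_n$, hence to be of the form $E_{ii}\otimes L_i(A)$ with $0\leq L_i(A)\leq I_n$. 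By linearity, each $L_i:M_n(\C)\to M_n(\C)$ is a positive unital linear map, so any hypothetical extension is rigidified to the explicit form
\[\Psi\begin{pmatrix}X&Y\\ Y^*&Z\end{pmatrix}=\begin{pmatrix}L_1(X)&Y^t/4\\ \bar Y/4&L_2(Z)\end{pmatrix}.\]
This is the conceptual pivot of the proof, and the step I expect to be the main obstacle to discover; everything that follows is essentially forced.

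\emph{Quantitative step.} Test $\Psi$ on the rank-one projections $|w\rangle\langle w|$ with $|w\rangle=\tfrac{1}{\sqrt 2}(|1\rangle\otimes|x\rangle+e^{i\phi}|2\rangle\otimes|z\rangle)$ for unit vectors $x,z\in\C^n$. Positivity of $\Psi(|w\rangle\langle w|)$, expressed via the scalar $2\times 2$ determinant condition against arbitrary test vectors $u,v\in\C^n$, yields
\[\langle u|L_1(|x\rangle\langle x|)|u\rangle\,\langle v|L_2(|z\rangle\langle z|)|v\rangle\geq\tfrac{1}{16}|\langle u|\bar z\rangle|^2|\langle\bar x|v\rangle|^2.\]
Specializing to $u=\bar z$ and $v=\bar x$ and setting $f(x,z):=\langle\bar z|L_1(|x\rangle\langle x|)|\bar z\rangle$, $g(x,z):=\langle\bar x|L_2(|z\rangle\langle z|)|\bar x\rangle$, this collapses to the pointwise lower bound $f(x,z)g(x,z)\geq 1/16$ for all unit $x,z\in\C^n$.

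\emph{Counting contradiction.} The values $f$ and $g$ lie in $[0,1]$. Unitality of $L_i$ gives $\sum_i f(e_i,z)=\langle\bar z|L_1(I)|\bar z\rangle=1$ for any orthonormal basis $\{e_i\}$ and any $z$ (symmetrically for $g$), hence $\sum_{i,j}f(e_i,e_j)=\sum_{i,j}g(e_i,e_j)=n$. Since $g\leq 1$, $\sum_{i,j}f(e_i,e_j)g(e_i,e_j)\leq\sum_{i,j}f(e_i,e_j)=n$, while the pointwise bound $fg\geq 1/16$ forces $\sum_{i,j}f(e_i,e_j)g(e_i,e_j)\geq n^2/16$. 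Comparing, $n\leq 16$, contradicting $n>16$. The threshold is thus a direct consequence of the factor $1/4$ in the definition of $\Phi_n$: replacing $1/4$ by a general coefficient $c$ would give the threshold $n>1/c^2$.
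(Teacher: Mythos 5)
Your non-extendibility argument is correct and is essentially the paper's: the structural step (any positive extension must map each diagonal corner into itself via a positive unital map $L_i$, because $0\le\Psi(E_{ii}\otimes A)\le E_{ii}\otimes I_n$) is exactly the paper's first move, and your rank-one-projection test plus double counting is a minor variant of the paper's use of the positive elements $\begin{spmatrix}E_{ii} & E_{ij}\\ E_{ji} & I_n\end{spmatrix}$, which gives $L_1(E_{ii})\ge\frac1{16}E_{jj}$ and hence $I_n=\sum_i L_1(E_{ii})\ge\frac{n}{16}E_{jj}$; both routes produce the same threshold $n>16$.

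The genuine gap is the norm computation. Russo--Dye (Corollary 2.9 of \cite{paulsenbook}) says a positive map defined on a unital C$^*$-algebra attains its norm at the identity; it does not apply to a positive map defined only on an operator system, and $\spc A_n$ is not a C$^*$-algebra. Indeed, Arveson's example quoted in the introduction is precisely a positive unital map on an operator system whose norm is $2$, and the sharp general bound for positive unital maps on operator systems is $\|\Phi\|\le 2$. So $\|\Phi_n\|=\|\Phi_n(I)\|$ cannot be had for free --- establishing it is the nontrivial part of this half of the theorem. The paper proves $\|\Phi_n\|=1$ in two steps: for $n\le 4$ the map $B\mapsto\frac14 B^t$ is completely contractive, so $\Phi_n$ is completely positive, hence extends by Arveson's theorem to a completely positive map on all of $M_{2n}(\C)$, to which Russo--Dye does apply; for $n>4$ one compresses a generic $R\in\spc A_n$ to the at-most-four-dimensional span of the vectors $x_1,x_2,y_1,y_2$ realizing $\|\Phi_n(R)\|$ and reduces to the $n\le 4$ case. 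You need to supply an argument of this kind (or some other genuine proof that $\|\Phi_n(R)\|\le\|R\|$ for non-self-adjoint $R$); as written, this step of your proof fails.
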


\begin{proof}
It is easy to see that $\Phi_n$ is a positive map. For  $n\le 4$, the map is also completely positive.  Indeed,  it is well known that the map $B \to \frac14 B^t$ is a completely contractive map on $M_n (\C)$ for $n  \le 4$, and   thus  \cite[Lemma 8.1]{paulsenbook} implies that $\Phi_n$ is completely positive for   $n\le 4$.  

We now show that $\|\Phi_n\|  =1$ for every $n$.   For $n\le 4$, this follows from the fact that $\Phi_n$ is completely positive, and therefore extendible to a completely positive map on the whole algebra. Hence, the Russo-Dye result (Corollary 2.9 in \cite{paulsenbook})  implies $\| \Phi_n\|   =  \|  \Phi (I_{2n})\|=1$.  
For $n>4$, consider a generic matrix $ R = \begin{spmatrix} aI_n & B \\ C & dI_n \end{spmatrix} \in \spc A_n$  and write 
\[
 \left\| \Phi_n(R) \right\| =  \Big\langle \begin{pmatrix} aI_n & \frac14 B^t \\ \frac14 C^t & dI_n \end{pmatrix} 
 \begin{pmatrix} x_1 \\ x_2 \end{pmatrix} \Big\vert \begin{pmatrix} y_1 \\ y_2 \end{pmatrix} \Big\rangle.
\]
for some suitable vectors  $x_1, x_2, y_1, y_2$  in $\C^n$  satisfying the normalization condition $\|x_1\|^2 + \|x_2\|^2 = \|y_1 \|^2 + \|y_2\|^2 =1$.

Let $k\le 4$ be the dimension of the  subspace $ \spc S  :  = \Span \{x_1,x_2,y_1,y_2\}$ and  let   $V \in  M_{n,k}   (\C)$ be an  isometry satisfying $  V  \C^k  = \spc S$. 
 Then, 
\begin{align*}
\Big\langle \begin{pmatrix} aI_n & \frac14 B^t \\ \frac14 C^t & dI_n \end{pmatrix} 
 \begin{pmatrix} x_1 \\ x_2 \end{pmatrix} \Big\vert \begin{pmatrix} y_1 \\ y_2 \end{pmatrix} \Big\rangle    \le  \left\|  \begin{pmatrix} aI_k & \frac14   V^* B^t  V \\ \frac14   V^* C^t   V& d I_k \end{pmatrix}      \right\|=  \left\|    \Phi_k (   R') \right\|  \, ,
\end{align*}
with $R':  =\begin{spmatrix}       a I_k &  V^t  B  V^{  \ast  t}       \\  V^t C   V^{\ast  t}& d I_k   \end{spmatrix}$.  Since $\|  \Phi_k\|=1$,  $\|  \Phi_k  (R') \|  \le  \|  R'\|$.   Moreover, we have    
\[   \|  R'\|     =   \left\|  \begin{pmatrix}       V^{\ast t}  &    0        \\    0&   V^{\ast t}  \end{pmatrix}   \,  R'  \,  \begin{pmatrix}       V^{t}  &    0        \\    0&   V^{t}  \end{pmatrix}   \right\|   =  \\   \left\|  \begin{pmatrix}       a \,  P  &    P B   P        \\    P C   P& d \,  P    \end{pmatrix} \right\|   \le \|  R\|   \,, \]
having defined the projection $P:  =  V^{  \ast  t}  V^t$.   In summary, we have shown that $\|  \Phi_n (R)\|  \le \|  R\|$ for every matrix $R\in  \spc A_n$. Since $\Phi_n$ is unital, $\| \Phi_n\| = 1$.

Now assume that $\Phi_n$ could be extended to a positive map $\Psi_n$ on $M_{2n}  (\C)$. 
Let $E_{i,j}$ denote the usual matrix units,  with a 1 in the $ij$-entry, and zeros everywhere else.
Note that if $0 \le P \le I_n$, then
\[
 0 \le \Psi_n \big( \begin{pmatrix} P & 0 \\0 & 0 \end{pmatrix} \big) 
 \le \Psi_n \big( \begin{pmatrix} I_n &0 \\0 &0 \end{pmatrix} \big) 
 = \begin{pmatrix} I_n & 0 \\ 0 & 0 \end{pmatrix}.
\]
This implies that for any $X \in M_n$, there is some $Y \in M_n$ so that
\[
 \Psi_n \big( \begin{pmatrix} X & 0 \\ 0 & 0 \end{pmatrix} \big)  
 = \begin{pmatrix} Y & 0 \\ 0 & 0 \end{pmatrix}.
\]

Let
\[
 \Psi_n \big( \begin{pmatrix} E_{i,i} & 0 \\ 0 & 0 \end{pmatrix} \big) = \begin{pmatrix} P_i & 0 \\ 0 & 0 \end{pmatrix}.
\] 
Since $\sum_{i=1}^n E_{i,i} = I_n$, we have that $\sum_{i=1}^n P_i = I_n$.

Since for each $1 \le i,j \le n$,
\[
 Q_{ij} :=  \begin{pmatrix} E_{i,i} & E_{i,j} \\ E_{j,i} & I_n \end{pmatrix} \ge 0,
\]
we would have that
\[
0 \le \Psi_n(Q_{ij}) = \begin{pmatrix} P_i & \frac14 E_{j,i} \\ \frac14 E_{i,j} & I_n \end{pmatrix}.
\]
From this it follows that $P_i \ge \frac1{16} E_{jj}$ for $1 \le i \le n$.
Therefore
\[
 I_n = \sum_{i=1}^n P_i \ge \frac{n}{16} E_{j,j}.
\]
This is a contradiction when $n>16$.

Therefore, no positive extension can exist when $n>16$.
\end{proof}


\section{An unextendible positive isometry, real case} 

In this section we present an example of a positive isometric map on a real operator subsystem of a real matrix algebra that is not extendible to a positive map on the full matrix algebra. What it means for an operator on a real Hilbert space $\cl H$ to be positive is generally accepted:
\[
 P \ge 0 \iff P = P^* \AND \langle Px, x \rangle \ge 0, \, \forall x \in \cl H \iff \exists X,\ P = X^*X  .
\]

However, what is meant by a real operator system and by a map between real operator systems to be positive is a bit ambiguous. 
See \cite{BlecherTepsan} and the references therein for further discussion of what are the appropriate definitions of operator systems, positive maps and completely positive maps over real Hilbert spaces.  
Also see \cite{Ruan} for proofs of, appropriately defined, real versions of many classical results, such as Arveson's Extension Theorem and Stinesping's Representation Theorem.  An investigation on the relation between positivity on the real field and positivity on the complex field is provided  in  a forthcoming paper \cite{CDPR}

Here we adopt the terminology that a map $\Phi$ between spaces of real matrices is {\em positivity preserving} provided that
$ P \ge 0 \implies \Phi(P) \ge 0.$
Note that for complex operator systems this corresponds to our usual definition of a complex linear map being positive.  
We will call $\Phi$ {\em positive} if it is positivity preserving and {\em self-adjoint}, i.e., $\Phi(A^t) = \Phi(A)^t$.

In what follows we use $\F$ to denote either $\R$ or $\C$.

We need the following immediate consequence of \cite[Lemma 3.1]{paulsenbook}.

\begin{lemma}\label{lem:Snpositive}\ 
\begin{enumerate}
\item A matrix $M = \begin{spmatrix}   a  I_n   & C  \\  C^* &  b  I_n   \end{spmatrix} \in M_2(M_n(\F))$ \vspace{.3ex}
is positive if and only if  $a\ge 0$,  $b\ge 0$, and $\|  C \|  \le \sqrt{ab}$.  \vspace{.3ex}

\item A matrix $N = \begin{spmatrix}   A   & b I_n  \\  c I_n &  d  I_n   \end{spmatrix} \in M_2(M_n(\F))$ \vspace{.3ex} 
is positive if and only if $A\ge 0$, $c=\bar b$, $d\ge0$ and $dA \ge |b|^2 I_n$.
\end{enumerate}
\end{lemma}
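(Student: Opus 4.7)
The plan is to derive both statements directly from the standard $2\times 2$ block factorization of positive operators quoted as Lemma 3.1 in Paulsen's book, namely that $\begin{pmatrix} P & R \\ R^* & Q \end{pmatrix} \geq 0$ holds if and only if $P \geq 0$, $Q \geq 0$, and $R = P^{1/2} K Q^{1/2}$ for some contraction $K$. An equivalent Schur complement form is convenient when one diagonal block is invertible: $Q > 0$ reduces positivity to $Q \geq 0$ and $P \geq R Q^{-1} R^*$.

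For part (1), I would substitute $P = aI_n$ and $Q = bI_n$. Self-adjointness of $M$ is automatic from its form. The existence of a contraction $K$ with $C = \sqrt{a}\sqrt{b}\, K = \sqrt{ab}\, K$ is then equivalent to requiring $a, b \geq 0$ together with $\|C\| \leq \sqrt{ab}$: given such $C$ one simply takes $K = C/\sqrt{ab}$ when $ab > 0$, and otherwise $\sqrt{ab} = 0$ forces $C = 0$, in which case $K = 0$ works.

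For part (2), I first observe that any positive $N$ must satisfy $N = N^*$, which forces $A = A^*$ and $(cI_n)^* = bI_n$, yielding $c = \bar b$. Applying the same factorization lemma, I would then need $A \geq 0$, $d \geq 0$, and a contraction $K$ with $bI_n = \sqrt{d}\, A^{1/2} K$. The cleanest route is the Schur complement form: when $d > 0$, positivity is equivalent to $A \geq (bI_n)(dI_n)^{-1}(\bar b I_n) = (|b|^2/d) I_n$, i.e., $dA \geq |b|^2 I_n$. The case $d = 0$ requires separate inspection: positivity of $N$ with vanishing $(2,2)$ block forces the off-diagonal block $bI_n$ to vanish, hence $b = 0$, which is consistent with the stated condition $dA = 0 = |b|^2 I_n$.

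The main obstacle, such as it is, lies only in the boundary case $d = 0$ in part (2), where the Schur complement formula breaks down and one must argue directly (or by a short continuity argument starting from $d > 0$ and letting $d \downarrow 0$). Once that is handled, both equivalences are immediate consequences of the block-factorization lemma.
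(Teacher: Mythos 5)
Your proof is correct and follows essentially the same route as the paper: both reduce to the standard positivity criterion for $2\times 2$ block operator matrices (Paulsen's Lemma 3.1), the paper via a congruence with a diagonal factor and you via the contraction factorization and Schur complement, which are the same criterion in different clothing. Your handling of the degenerate cases $ab=0$ and $d=0$ matches the paper's as well.
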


\begin{proof}
(1) Clearly $M \ge 0$ requires $a\ge0$ and $b\ge0$. If $ab=0$, then we also need $C=0$.
So we may suppose that $ab>0$.
Factor $M$ as
\[
 M = \begin{pmatrix}   a  I_n   & C  \\  C^* &  b  I_n   \end{pmatrix}
 = \begin{pmatrix}   \sqrt a\,  I_n   & 0  \\  0 &  \sqrt b  \,I_n   \end{pmatrix}
 \begin{pmatrix}     I_n   & \frac 1 {\sqrt{ab}} C  \\  \frac 1 {\sqrt{ab}} C^*  &    I_n   \end{pmatrix}
 \begin{pmatrix}   \sqrt a  \,I_n   & 0  \\  0 &  \sqrt b  \,I_n   \end{pmatrix} .
\]
Then $M\ge0$ if and only if the middle factor is positive.
By \cite[Lemma 3.1]{paulsenbook}, this holds precisely when $\|C/\sqrt{ab} \| \le 1$.

(2) Similarly, for $N\ge0$, we require $c=\bar b$ and $d\ge0$.
If $d=0$, then $b=c=0$ and $A\ge0$ is sufficient.
If $d>0$, factor
\[
 N = \begin{pmatrix}   A   & b I_n  \\  c I_n &  d  I_n   \end{pmatrix}
 = \begin{pmatrix}   I_n   & 0  \\  0 &  \sqrt d\,  I_n   \end{pmatrix}
 \begin{pmatrix}   A   & \frac b {\sqrt d}\, I_n  \\  \frac {\bar b} {\sqrt d}\, I_n &   I_n   \end{pmatrix}
 \begin{pmatrix}   I_n   & 0  \\  0 &  \sqrt d\,  I_n   \end{pmatrix} .
\]
This is positive if and only if the middle factor is positive.
By \cite[Lemma 3.1]{paulsenbook}, this holds precisely when $A \ge \frac{|b|^2}d I_n$.
\end{proof}

We set
\begin{align}\label{Sn}
 \spc S_n :  =  \left\{  \begin{pmatrix}   a  I_n   & C  \\  C^t &  b  I_n   \end{pmatrix} :  a\in \R , \ b \in  \R, \ C  \in M_n  (  \R)  \right\}  \subseteq M_2  ( M_n(\R) ) .
\end{align} 
Then $\spc S_n$ is contained in $\Sym_{2n}(\R)$ and it is spanned by its positive elements.

\begin{theo}\label{Phinunextendible}
Consider the map  $\Upsilon_n:  \spc S_n \to \spc S_n$ defined by 
\begin{align} \label{Phin}
  \Upsilon_n  \begin{pmatrix}   a  I_n   & C  \\  C^t  &  b  I_n        \end{pmatrix}   
  =  \begin{pmatrix}   a  I_n   & C^t  \\    C  &  b  I_n        \end{pmatrix}
\end{align}
The map $\Upsilon_n$ is unital, is an involution, i.e.,  $\Upsilon_n^2 =  \id_{\cl S_n}$, and
\begin{enumerate}
 \item   $\Upsilon_n$ is positivity preserving,
 \item $\Upsilon_n$ is self-adjoint,  \item   $\| \Upsilon_n \| = 1$, and   
 \item   $\Upsilon_n$ does not admit a positive preserving extension to $M_{2n}(\R)$ for any $n\ge 2$.   
\end{enumerate}
\end{theo}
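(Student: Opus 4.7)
The plan is to handle items (1)--(3) quickly and focus on the unextendibility (4). For (1), Lemma~\ref{lem:Snpositive}(1) characterizes positivity of $\begin{spmatrix} aI_n & C\\ C^t & bI_n\end{spmatrix}\in\spc S_n$ by $a,b\ge 0$ and $\|C\|\le\sqrt{ab}$; all three conditions are invariant under $C\mapsto C^t$ over $\R$, so $\Upsilon_n$ preserves positivity. For (2), every element of $\spc S_n$ and every element of its image is symmetric, so $\Upsilon_n(A^t)=\Upsilon_n(A)=\Upsilon_n(A)^t$ holds trivially. For (3), a real SVD $C=U\Sigma V^t$ conjugates both $A$ and $\Upsilon_n(A)$ to the same matrix:
\begin{equation*}
(U^t\oplus V^t)\,A\,(U\oplus V)=\begin{pmatrix} aI_n & \Sigma\\ \Sigma & bI_n\end{pmatrix}=(V^t\oplus U^t)\,\Upsilon_n(A)\,(V\oplus U),
\end{equation*}
so $A$ and $\Upsilon_n(A)$ are orthogonally similar and $\|\Upsilon_n(A)\|=\|A\|$; combined with unitality this yields $\|\Upsilon_n\|=1$.

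For (4), suppose for contradiction that $\widetilde\Upsilon_n\colon M_{2n}(\R)\to M_{2n}(\R)$ is a positivity-preserving extension of $\Upsilon_n$. Exactly as in the first half of the proof of Theorem~\ref{davidson-paulsen ex}, the inequalities $0\le\begin{spmatrix} E_{ii}&0\\0&0\end{spmatrix}\le\begin{spmatrix} I_n&0\\0&0\end{spmatrix}$, together with the facts that $\Upsilon_n$ fixes $\begin{spmatrix} I_n&0\\0&0\end{spmatrix}$ and $\begin{spmatrix} 0&0\\0&I_n\end{spmatrix}$, produce $P_i,Q_j\ge 0$ with
\begin{equation*}
\widetilde\Upsilon_n\!\begin{pmatrix} E_{ii}&0\\0&0\end{pmatrix}=\begin{pmatrix} P_i&0\\0&0\end{pmatrix},\qquad \widetilde\Upsilon_n\!\begin{pmatrix} 0&0\\0&E_{jj}\end{pmatrix}=\begin{pmatrix} 0&0\\0&Q_j\end{pmatrix},
\end{equation*}
and $\sum_i P_i=\sum_j Q_j=I_n$.

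The decisive test matrix is the rank-one positive element $M_{ij}:=(e_i\oplus e_j)(e_i\oplus e_j)^t=\begin{spmatrix} E_{ii}&E_{ij}\\ E_{ji}&E_{jj}\end{spmatrix}$ of $M_{2n}(\R)$. Since $\begin{spmatrix} 0&E_{ij}\\ E_{ji}&0\end{spmatrix}\in\spc S_n$ is sent by $\Upsilon_n$ to $\begin{spmatrix} 0&E_{ji}\\ E_{ij}&0\end{spmatrix}$, linearity gives $\widetilde\Upsilon_n(M_{ij})=\begin{spmatrix} P_i & E_{ji}\\ E_{ij} & Q_j\end{spmatrix}\ge 0$. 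Evaluating the associated quadratic form on the vector $(e_j,-\lambda e_i)^t\in\R^{2n}$ and optimizing over $\lambda\in\R$ yields the multiplicative bound $(P_i)_{jj}(Q_j)_{ii}\ge 1$; by AM--GM, $(P_i)_{jj}+(Q_j)_{ii}\ge 2$. Summing over $i$ with $j$ fixed and using $\sum_i P_i=I_n$ gives $1+\Tr(Q_j)\ge 2n$, and then summing over $j$ with $\sum_j Q_j=I_n$ gives $n=\Tr(I_n)=\sum_j\Tr(Q_j)\ge n(2n-1)$, a contradiction whenever $n\ge 2$. The main obstacle is identifying the test matrix $M_{ij}$: it must be a positive element of $M_{2n}(\R)\setminus\spc S_n$ whose $\widetilde\Upsilon_n$-image is determined by the already-controlled pieces $P_i$, $Q_j$, and $\Upsilon_n$ applied to a single element of $\spc S_n$; once that choice is made, the multiplicative bound and trace counting close the argument cleanly.
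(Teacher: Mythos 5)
Your proof is correct, and while it follows the same overall strategy as the paper for part (4) --- pin down the images of the diagonal corner blocks, then feed the extension a positive test matrix whose off-diagonal part lies in $\spc S_n$ --- the details diverge enough to be worth noting. The paper uses the test matrix $\begin{spmatrix} E_{ii}&E_{ij}\\ E_{ji}&I_n\end{spmatrix}$, whose lower-right block is already pinned by unitality, so positivity of the image immediately gives the operator inequality $P_i\ge E_{ji}E_{ij}=E_{jj}$ via the Schur complement (Lemma~\ref{lem:Snpositive}), and summing over $i$ yields $I_n\ge nE_{jj}$ at once. Your rank-one test matrix $M_{ij}$ forces you to also control the lower-right corner through the auxiliary operators $Q_j$, and you then extract a scalar inequality $(P_i)_{jj}(Q_j)_{ii}\ge 1$ by a discriminant computation, closing with AM--GM and a double trace count; this is more laborious but equally valid, and it has the minor aesthetic advantage of using only rank-one positive witnesses. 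For part (3) you also depart from the paper: the paper deduces $\|\Upsilon_n\|=1$ from the order-theoretic bound $-\|B\|I\le B\le\|B\|I$ for symmetric $B$ together with positivity preservation, whereas your real-SVD argument shows the stronger statement that $\Upsilon_n(A)$ and $A$ are orthogonally equivalent, i.e.\ that $\Upsilon_n$ is an isometry --- which is in fact the property advertised in the section title, so this is a genuine (small) improvement. Parts (1) and (2) match the paper.
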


\begin{proof}
 It is immediate from the definition that $\Upsilon_n$ is unital, is an involution, and self-adjoint.  We now  show that $\Upsilon_n$ is positivity preserving.
 
Consider a positive element $M =   \begin{pmatrix}   a  I_n   & C  \\  C^t  &  b I_n  \end{pmatrix}   \in \spc S_n$.
By Lemma~\ref{lem:Snpositive}, $a,b$ are positive and $\|  C \|  \le \sqrt{ab}$. Now
\[
 \Upsilon_n(M)  =  \begin{pmatrix}   a I_n   & C^t  \\  C &  b I_n   \end{pmatrix} .
\]
By Lemma~\ref{lem:Snpositive}, this is positive if $\| C^t \| = \|C\|  \le \sqrt{ab}$; which is true.    
Hence $\Upsilon_n$ is a positive preserving map.  

We next show that the map $\Upsilon_n$ has unit norm.  
This follows since the real operator space $\spc S_n$ consists of symmetric real matrices.  Every symmetric real matrix $B$ satisfies $-\|B\| I \le B \le \|B\| I$, and therefore
\[
 -\|B\| \|\Upsilon_n(I)\| \le \|\Upsilon_n(B)\| \le \|B\| \|\Upsilon_n(I)\| .  
\]
Hence $\| \Upsilon_n(B) \| \le \|\Upsilon_n(I)\| \|B\|$. So $\Upsilon_n$ has unit norm.

Following the proof of the Theorem \ref{davidson-paulsen ex}, it is also easy to see that $\Upsilon_n$ has 
no positive preserving extension to the whole $M_{2n}  (\R)$, except in the trivial case where $n=1$. 
The proof is by contradiction. 
We assume  that a positivity preserving extension  $\Psi_n:     M_{2n}  (\R)  \to M_{2n}  (\R )$ exists, 
and prove that its existence leads to a contradiction.  
   
Let $E_{ij} \in  M_n(\R)$ be the usual matrix units.  
Since $\Psi_n$ is positivity preserving, we must have  
\begin{align*}
 \Psi_n  \begin{pmatrix}  E_{ii} & 0 \\  0  &  0   \end{pmatrix}   
 &\le  \Psi_n  \begin{pmatrix}    I_n  & 0  \\  0  &  0  \end{pmatrix} 
 =  \Upsilon_n \begin{pmatrix}  I_n  & 0 \\  0  &  0   \end{pmatrix}  
 =  \begin{pmatrix}    I_n    & 0  \\  0  &  0  \end{pmatrix}  .
\end{align*}
Equivalently, we must have 
\begin{align*}
 \Psi_n  \begin{pmatrix}  E_{ii} & 0 \\ 0  &  0  \end{pmatrix} 
 =    \begin{pmatrix}  P_{i}  & 0 \\ 0  &  0   \end{pmatrix} ,
\end{align*}
for some matrix $0\le P_i\le I_n$.  
Now, the matrix $M: =  \begin{pmatrix}  E_{ii}  & E_{ij}  \\  E_{ji}  &  I_n   \end{pmatrix}$ is positive, and we have  
\begin{align*}
 \nonumber \Psi_n ( M )    &  =  
 \Psi_n  \begin{pmatrix}  E_{ii}  & 0 \\  0  &  0   \end{pmatrix} + \Psi_n \begin{pmatrix}  0  & E_{ij}  \\  E_{ji}  &  I_n   \end{pmatrix}  \\     
 \nonumber    & =   \begin{pmatrix}  P_{i}  & 0  \\   0  &  0    \end{pmatrix}  
  + \Upsilon_n  \begin{pmatrix}  0  & E_{ij}  \\ E_{ji}  &  I_n   \end{pmatrix}  \\&  
  =  \begin{pmatrix}  P_{i}   & 0  \\   0  &  0    \end{pmatrix} 
  +  \begin{pmatrix}    0  & E_{ji}  \\  E_{ij}  &  I_n    \end{pmatrix}  
  =  \begin{pmatrix}  P_{i}  & E_{ji}  \\  E_{ij}  &  I_n  \end{pmatrix}  .
\end{align*}   
Since $M$ is positive and $\Psi_n$ is a positivity preserving, $\Psi_n(M)$ must be positive. 
By \cite[Lemma~3.1]{paulsenbook},
\[
 P_i \ge E_{ji}E_{ij} = E_{jj} \quad\text{for }1 \le i \le n.
\]
Therefore, $I_n =\sum_{i=1}^n P_i  \ge  n  E_{jj}$, a contradiction.   
So a positivity preserving extension $\Psi_n$ does not exist for $n\ge2$. 
\end{proof}

    Interestingly, the complexification of $\Upsilon_n$ is not an isometry.   
 For a map $\Phi$ on a real subspace $ \spc S$ of  $M_n(\R)$, its {\it complexification} 
is the map $\Phi'$ on 
\[ \spc S':= \spc S+ i \spc S \subseteq M_n(\C)\]
given by $$\Phi'(A+iB)=\Phi(A)+i\Phi(B),$$ where  $A,B\in \spc S$. 

Let $\spc S'_n$ be the complex subspace spanned by the real operator system $\spc S_n$, namely
\begin{align*} 
 \spc S'_n :=  \left\{ \begin{pmatrix}   a  I_n   & C  \\ C^t  &  b  I_n   \end{pmatrix} :  a, b \in  \C, \ C  \in M_n  (  \C)  \right\}   \subseteq   M_2  \left(  M_n  (\C)\right) . 
\end{align*} 
Note that the positive elements in $\spc S'_n$ are exactly the same as the positive elements in $\spc S_n$: 
they are matrices with real entries satisfying $a\ge 0$, $b\ge 0$, and $\|  C\|  \le \sqrt{ab}$. 

Let  $\Upsilon'_n:  \spc S'_n \to \spc S'_n$ be the complexification of the map $\Upsilon_n$, namely 
\begin{align*} 
  \Upsilon'_n  \begin{pmatrix}   a  I_n   & C  \\ C^t  &  b  I_n  \end{pmatrix}
  =  \begin{pmatrix}   a  I_n   & C^t  \\  C  &  b  I_n  \end{pmatrix}  .
\end{align*}

\begin{prop}
The map $\Upsilon'_n$ is positive and unital, but $\| \Upsilon'_n \| = \frac 2 {\sqrt 3}$.
\end{prop}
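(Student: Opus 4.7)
The plan is to verify positivity and unitality, which follow quickly from the preceding discussion, and then to compute the norm by exploiting that $\Upsilon'_n$ is an involution. A positive $M\in\spc S'_n$ is self-adjoint, and the self-adjoint elements of $\spc S'_n$ coincide with the elements of $\spc S_n$ (all entries real), on which $\Upsilon'_n$ acts just as $\Upsilon_n$ did; Theorem~\ref{Phinunextendible}(1) then gives $\Upsilon'_n(M)\ge 0$. Unitality is immediate since $I_{2n}$ has $C=0=C^t$ and is fixed by $\Upsilon'_n$.

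For the norm, since $(\Upsilon'_n)^2=\mathrm{id}$ I would decompose $\spc S'_n=(\spc S'_n)^+\oplus(\spc S'_n)^-$ into the $\pm 1$-eigenspaces of $\Upsilon'_n$. Explicitly, $(\spc S'_n)^+$ consists of the matrices with $C$ complex-symmetric (together with scalar multiples of $I_n$ on the diagonal blocks), while $(\spc S'_n)^-$ consists of matrices of block form $\begin{pmatrix}0 & C_a\\ -C_a & 0\end{pmatrix}$ with $C_a$ complex-antisymmetric. Writing $M=M^++M^-$ gives $\Upsilon'_n(M)=M^+-M^-$, so $\|\Upsilon'_n\|$ equals the supremum of $\|M^+-M^-\|/\|M^++M^-\|$ over such decompositions. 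For the upper bound I would compute $(M^+\pm M^-)^*(M^+\pm M^-)$ and observe that the self-interaction terms $(M^\pm)^*M^\pm$ are common to both while the cross terms $(M^+)^*M^-+(M^-)^*M^+$ flip sign; using the rigid block form of $M^-$ one then wants to derive a quadratic inequality of the shape $\|M^+-M^-\|^2\le \frac{4}{3}\|M^++M^-\|^2$. For the lower bound I would exhibit an explicit $M$ (already for $n=2$, then extend by padding with zeros for larger $n$) in which $a,b$ have suitable imaginary parts and $C$ has balanced symmetric and antisymmetric components chosen to saturate this inequality.

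The main obstacle is pinning down the precise extremizer and verifying the matching upper bound. The naive estimate $\|\Upsilon'_n\|\le 2$ is immediate: writing $M=H+iK$ with $H,K$ self-adjoint in $\spc S'_n=\spc S_n+i\spc S_n$, one has $\|\Upsilon'_n(M)\|\le \|\Upsilon_n(H)\|+\|\Upsilon_n(K)\|=\|H\|+\|K\|\le 2\|M\|$, since $\Upsilon_n$ is isometric on $\spc S_n$. Tightening the factor $2$ down to $2/\sqrt 3$ requires exploiting the rigid coupling $C=C_s+C_a$ between symmetric and antisymmetric parts of the off-diagonal block, rather than treating $H$ and $K$ as independent self-adjoint elements; I expect the sharp bound to come from a Kadison-type inequality applied to the off-diagonal block, interpolating between the isometric bound for self-adjoint elements and the worst-case bound $2$ for arbitrary complex combinations.
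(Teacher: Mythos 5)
Your handling of positivity and unitality is correct and is essentially the paper's own argument (the positive elements of $\spc S_n'$ are exactly the positive elements of $\spc S_n$, on which $\Upsilon_n'$ agrees with $\Upsilon_n$). The gap is in the norm computation, which is the entire content of the proposition: as written, your proposal establishes neither the lower bound $\|\Upsilon_n'\|\ge 2/\sqrt3$ nor the matching upper bound. For the lower bound you defer to ``an explicit $M$ \dots chosen to saturate this inequality'' without producing one. This is not a formality: the witness must have a genuinely complex off-diagonal block (for real $C$ the map is isometric by Theorem~\ref{Phinunextendible}), and the paper's proof rests on the concrete choice $a=1$, $b=0$, $C=\begin{spmatrix}1&0\\ i&0\end{spmatrix}$ in $M_4(\C)$, for which $M^*M$ has eigenvalues $3,3,0,0$ while $\Upsilon_2'(M)^*\Upsilon_2'(M)$ has eigenvalues $4,1,1,0$, so that $\|\Upsilon_2'(M)\|/\|M\|=2/\sqrt3$ exactly. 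Finding this extremizer is the harder half of the lower bound and cannot be left implicit.

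For the upper bound the proposal is circular as it stands: the ``quadratic inequality of the shape $\|M^+-M^-\|^2\le \frac{4}{3}\|M^++M^-\|^2$'' that you want to derive is precisely the assertion $\|\Upsilon_n'\|\le 2/\sqrt3$ restated in the eigenspace coordinates, and the only estimate you actually justify is the triangle-inequality bound $\|\Upsilon_n'\|\le 2$. The observation that the cross terms $(M^+)^*M^-+(M^-)^*M^+$ flip sign gives, on its own, only
\[
\|M^+-M^-\|^2 \;\le\; \|M^++M^-\|^2 + 2\,\bigl\|(M^+)^*M^-+(M^-)^*M^+\bigr\|,
\]
and bounding the cross term by $2\|M^+\|\,\|M^-\|$ recovers nothing better than the factor $2$; no constant $4/3$ emerges, and the appeal to ``a Kadison-type inequality'' is not an argument. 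The paper proceeds quite differently: from $\|M\|=1$ it extracts the scalar constraints $|a|^2+\|C\|^2\le 1$ and $|b|^2+\|C\|^2\le 1$, dominates $\|\Upsilon_n'(M)\|$ by the norm of the $2\times2$ matrix $\begin{spmatrix}|a|&\|C\|\\ \|C\|&|b|\end{spmatrix}$, and reduces the problem to an elementary (and still delicate) optimization in the real parameters $|a|,|b|,\|C\|$. Some reduction of this kind --- trading the operator inequality for a low-dimensional scalar one --- is the missing idea; your decomposition correctly identifies the relevant structural feature (the coupling between the symmetric and antisymmetric parts of $C$) but does not supply the step that turns it into the constant $2/\sqrt3$.
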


\begin{proof}
Since $\spc S'_n$ has the same positive elements as $\spc S_n$, it is clear that $\Upsilon'_n$ is positive.

For the norm, we first consider the $n=2$ case.  
Let
\begin{align*}\label{largernorm}
M = \left(   \begin{array}{cc|cc}     
1& 0 & 1&  0  \\ 
0 & 1 & i & 0 \\
\hline
1 & i & 0 &  0\\
0 & 0 & 0 & 0   \end{array} \right)
\quad\text{and}\quad
M^* M = \left(   \begin{array}{cc|cc}     
2& -i & 1&  0  \\ 
i & 2 & -i & 0 \\
\hline
1 & i & 2 &  0\\
0 & 0 & 0 & 0   \end{array} \right) .
\end{align*}
The eigenvalues of $M^* M$ are $+3$ and $0$, both with multiplicity 2.  
Hence, $\| M \| = \sqrt 3$. 
On the other hand,  let 
\begin{align*}
 N:  = \Upsilon'_2  (M )= \left(   \begin{array}{cc|cc}     
 1& 0 & 1&  i  \\ 
 0 & 1 & 0 & 0 \\
\hline
1 & 0 & 0 &  0\\
i & 0 & 0 & 0   \end{array} \right) 
\quad\text{and}\quad 
N^* N = \left(   \begin{array}{cc|cc}     
3& 0 & 1&  -i  \\ 
0 & 1 & 0 & 0 \\
\hline
1 & 0 & 1 &  -i\\
i & 0 & i & 1  \end{array} \right) .
\end{align*}
The eigenvalues of $N^* N$ are $+4$, $0$, and $+1$ with multiplicity 2. 
Hence, the norm of $\| N \| = 2 $, and thus $\| \Upsilon'_2 \| \ge 2/\sqrt 3$.   
Extending this example to $n\ge 2$, we obtain $\| \Upsilon'_n \| \ge 2/\sqrt 3$ for $n\ge 2$.  

We now show that the norm is exactly $2/\sqrt 3$.  
Let 
\begin{align*}
 M  = \begin{pmatrix}   a  I_n   & C  \\  C^t  &  b  I_n   \end{pmatrix}
\end{align*}
be a generic element of $\spc S_n'$ with $\| M\|  =1$.  The condition $\| M\|  =1$  implies 
\begin{align*}
 |a|^2  +  \|  C\|^2  \le 1  \qquad{\rm and} \qquad 
 |b|^2  +  \|  C\|^2  \le 1  .
\end{align*}
Without loss of generality, let us assume that $|b|  \ge |a|$. 

Now, we have  
\begin{align*}
\nonumber \|  \Upsilon'_n  (M)  \|  &=  \left\|  \begin{pmatrix}   a  I_n   & C^t  \\   C &  b  I_n  \end{pmatrix} \right\| 
 \le  \left\|  \begin{pmatrix}   |a|    & \|C\|   \\    \|C\| & | b|    \end{pmatrix}  \right\| \\
\nonumber & 
 =  \frac{  |a| +  |b|   +  \sqrt{   (|b| -  |a|)^2   +  4  \|C\|^2} }2   \\
 &  \le  \frac{  |a| +  |b|   +  \sqrt{   (|b| -  |a|)^2   +  4   -  4|b|^2}}2  
\end{align*} 
Now, let us define $r:  =  |b|- |a|$, so that the bound becomes  
\begin{align*}
\|  \Upsilon'_n  (M)  \|    & \le  \frac{  2 |b| -r  +  \sqrt{   r^2   +  4   -  4|b|^2}}2  
\end{align*}
Note that, by definition, we have $  |b|  \ge r  \ge 0$.  
By maximizing over $|b|$, we obtain that the maximum is achieved for $|b|=  r$, yielding the bound  
\begin{align*}
\|  \Upsilon'_n  (M)  \|    & \le  \frac{   r  +  \sqrt{     4   -  3 r^2}}2    .
\end{align*}
The maximum over $r$ in the range $[0,1]$ is attained for $r=  1/\sqrt 3$, which yields the bound 
\begin{align*}
\|  \Upsilon'_n  (M)  \|    & \le  \frac{   1/\sqrt 3  +  \sqrt{   3}}2     =  \frac  { 4}{2\sqrt 3}  =  \frac{2}{\sqrt 3}    . \qedhere
\end{align*}
\end{proof}

\section{An unextendible positive isometry, complex case}  

In this section, we construct a positive map on a complex operator system inside $M_{2n}(\bb C)$ which is unital and isometric,
but still fails to have a positive extension.
The restriction to the corresponding real operator system will be a real linear map which is positive, unital and isometric.
Unlike the example in the previous section, this map will clearly have a positive, unital, isometric complexification.
Nevertheless, it does not have a positive extension to a map on $M_{2n}(\bb R)$.

We define an operator system
\begin{align*} 
\spc T_n &:= \left\{  \begin{pmatrix}   A  & b I_n  \\  c I_n &  d  I_n  \end{pmatrix} :   A\in M_n(\C)  ,  b ,  c , d  \in   \C  \right\} 
\end{align*} 
and a map $\Gamma_n:  \spc T_n\to \spc T_n$ by
\begin{align*}
 \Gamma_n  \begin{pmatrix}  A  & b I_n  \\  c  I_n &  d I_n  \end{pmatrix} 
  =  \begin{pmatrix}  A^t  &  b  I_n \\  c  I_n &  d I_n   \end{pmatrix}  .
\end{align*}
The map  $\Gamma_n$ is an  involution.   

\begin{prop}
The map  $\Gamma_n$ ia positive.  
\end{prop}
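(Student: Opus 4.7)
The plan is to invoke Lemma~\ref{lem:Snpositive}(2), which gives an explicit characterization of positivity for matrices of the exact form appearing in $\spc T_n$. Since $\Gamma_n$ leaves the scalars $b,c,d$ unchanged and only replaces the top-left block $A$ by its transpose $A^t$, the proof reduces to checking that the four conditions of Lemma~\ref{lem:Snpositive}(2) are preserved under $A \mapsto A^t$.

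Concretely, suppose $M = \begin{pmatrix} A & bI_n \\ cI_n & dI_n \end{pmatrix} \ge 0$. By Lemma~\ref{lem:Snpositive}(2) this gives $A \ge 0$, $c = \bar b$, $d \ge 0$, and $dA \ge |b|^2 I_n$. The second and third conditions are unaffected by $\Gamma_n$, so I would need to verify (i) $A^t \ge 0$ and (ii) $dA^t \ge |b|^2 I_n$. For (i), since $A \ge 0$ implies $A$ is Hermitian with nonnegative spectrum, the matrix $A^t = \bar A$ is again Hermitian with the same (real, nonnegative) spectrum, hence $A^t \ge 0$. For (ii), since $dA - |b|^2 I_n \ge 0$ is a Hermitian positive semidefinite matrix, its transpose $dA^t - |b|^2 I_n$ is again positive semidefinite by the same argument. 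Applying Lemma~\ref{lem:Snpositive}(2) again concludes $\Gamma_n(M) \ge 0$.

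There is essentially no obstacle here: the key fact being used is that the transposition map on $M_n(\C)$ is a positive (though not completely positive) map, together with the very clean form of the characterization in Lemma~\ref{lem:Snpositive}(2). The nontrivial content of the section will lie later, when unitality, isometry, and non-extendibility of $\Gamma_n$ are established; positivity itself is immediate from the shape of $\spc T_n$.
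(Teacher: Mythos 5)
Your proof is correct and follows essentially the same route as the paper: both apply the characterization in Lemma~\ref{lem:Snpositive}(2) and observe that transposition preserves positive semidefiniteness (equivalently, that $A$ and $A^t$ have the same spectrum), so all four conditions survive the map $A \mapsto A^t$. Your version just spells out the spectrum argument for both $A^t$ and $dA^t - |b|^2 I_n$ slightly more explicitly than the paper does.
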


\begin{proof}
By Lemma~\ref{lem:Snpositive}, the matrix $N =  \begin{pmatrix}   A  & b I_n  \\  c I_n &  d  I_n  \end{pmatrix}$ is positive 
if and only if $A\ge0$, $c = \bar b$, $d\ge0$ and $dA \ge |b|^2 I_n$.
Since $A$ and $A^t$ have the same spectrum,  Lemma~\ref{lem:Snpositive} shows that $\Gamma_n(N) \ge0$. 
\end{proof}

Next we show that $\Gamma_n$ has unit norm.  
We require the following.

\begin{thm}\label{thm:bc} 
For every $A  \in M_n (\C)$,  $b,c,d \in  \C$,   the  matrices
\begin{align*}
 M  =  \begin{pmatrix}   A    &   b I_n  \\  c I_n  &  d  I_n    \end{pmatrix}  \quad \text{and}  \quad  
 N  =  \begin{pmatrix}   A    &   c  I_n  \\  b I_n &  d  I_n     \end{pmatrix} 
\end{align*}  
have the same singular values. 
\end{thm}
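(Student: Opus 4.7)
The plan is to reduce the problem to showing that $M^*M$ and $N^*N$ have the same characteristic polynomial. Since the singular values of a matrix are the nonnegative square roots of the eigenvalues of $M^*M$ (counted with multiplicity), this is sufficient to conclude.

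The approach I would take is to exploit the scalar structure of three of the four blocks. A direct computation gives
\[
 \lambda I_{2n} - M^*M = \begin{pmatrix} (\lambda - |c|^2) I_n - A^*A & -bA^* - \bar c d\, I_n \\ -\bar b A - c\bar d\, I_n & (\lambda - |b|^2 - |d|^2)\, I_n \end{pmatrix},
\]
whose $(2,2)$ block is a scalar multiple of $I_n$ and therefore commutes with everything. The block determinant identity
\[
 \det\begin{pmatrix} P & Q \\ R & \mu I_n \end{pmatrix} = \det(\mu P - QR),
\]
valid whenever the lower-right block is scalar (a standard Schur-complement consequence), then applies. Expanding, I expect an expression of the form
\[
 \det(\lambda I - M^*M) = \det\bigl(\alpha(\lambda)\, I_n - \beta(\lambda)\, A^*A - bc\bar d\, A^* - \bar b \bar c d\, A\bigr),
\]
where $\alpha(\lambda)$ and $\beta(\lambda)$ are explicit scalars depending only on $\lambda$, $|b|^2$, $|c|^2$, $|d|^2$.

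The payoff is that every coefficient appearing on the right-hand side is manifestly invariant under $b \leftrightarrow c$: $\alpha(\lambda)$ and $\beta(\lambda)$ depend on $b, c$ only through $|b|^2 + |c|^2$ and $|b|^2|c|^2$, while the coefficients $bc\bar d$ and $\bar b\bar c d$ of $A^*$ and $A$ are symmetric in $b,c$ by commutativity of scalar multiplication. Since the analogous computation for $N^*N$ is precisely the original one with $b$ and $c$ interchanged, the two characteristic polynomials coincide, hence $M^*M$ and $N^*N$ have the same eigenvalues, and $M$ and $N$ have the same singular values. The main obstacle is bookkeeping: carefully tracking conjugations and signs in the expansion of $\mu P - QR$, and in particular verifying that the coefficient of $A^*A$ (which a priori could carry a $|b|^2$ contribution from $QR$) collapses to $\lambda - |d|^2$, independent of $b$ and $c$. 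It is precisely this collapse that makes the $b \leftrightarrow c$ symmetry visible.
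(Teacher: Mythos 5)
Your proposal is correct and follows essentially the same route as the paper: both compute the characteristic polynomial of $M^*M$ via the block determinant identity $\det\begin{pmatrix} P & Q \\ R & D\end{pmatrix} = \det(PD - QR)$ (valid here since the lower-right block is scalar), and both observe that after expansion the coefficient of $A^*A$ is $|d|^2-\lambda$ while the remaining coefficients depend on $b,c$ only through $|b|^2|c|^2$, $|b|^2+|c|^2$, and the symmetric products $bc\bar d$, $\bar b\bar c d$, so the polynomial is invariant under $b\leftrightarrow c$. The ``collapse'' you flag as the main thing to verify is exactly what the paper's explicit expansion confirms.
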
 

\begin{proof}  
The singular values of the matrix $M  =  \begin{pmatrix}   A  &  b I_n  \\  c I_n  &  d I_n   \end{pmatrix}$ 
are the square roots of the eigenvalues of the matrix 
\begin{align*}
\nonumber M^* M  &  =   
 \begin{pmatrix}   A^*  & \overline c I_n  \\ \overline b  I_n &  \overline d  I_n  \end{pmatrix} 
 \begin{pmatrix}   A    &   b  I_n  \\  c  I_n  &  d   I_n  \end{pmatrix} \\&  
 =  \begin{pmatrix}   A^*  A  + |c|^2  I_n  & A^*  b  + \overline c   d  I_n  \\    A  \overline b +  c \overline d  I_n  &   ( |b|^2  +  |d|^2)   I_n  \end{pmatrix}  .
\end{align*}
In turn, the eigenvalues of $M^* M$ are the zeros of the characteristic polynomial 
\begin{align*}
\nonumber p_M(\lambda)  &  =  \det  (  M^* M  - \lambda   I_{2n})  \\
 \nonumber  &  = \det \begin{pmatrix}  A^* A + (|c|^2 -\lambda) I_n  &  A^*  b  +  \overline c  d  I_n  \\  A  \overline b +  c \overline d  I_n  
 & ( |b|^2  +  |d|^2  - \lambda)  I_n  \end{pmatrix} . 
\end{align*}
The determinant can be computed with the formula 
\[
 \det \begin{pmatrix}  A  & B  \\  C  &  D   \end{pmatrix}  =  \det (A  -  B D^{-1}  C)  \det  (D)  =  \det  ( A D  -   B D^{-1} C  D)
\]
provided that $D$ is invertible.    
In our case, $D$ commutes with $C$, and the formula simplifies to  
\[
 \det \begin{pmatrix}  A  &  B \\  C  &   D  \end{pmatrix}  =  \det  ( A  D   -   B  C) .
\]
Hence, we obtain 
\begin{align*}
\nonumber  p_M(\lambda) &  = \det   \Big\{   [A^*   A   + (|c|^2 -\lambda)  I_n ][ (|b|^2  +  |d|^2  - \lambda ) I_n ]  \\
\nonumber & \qquad  \qquad   -    ( A^*  b  +   \overline c   d    I_n   )     (A  \overline b  +   c  \overline d   I_n )  \Big\}  \\
  &  =  \det\Big\{  A^* A   ( |d|^2  - \lambda)    -  A   \overline b  \overline c   d   -  A^*   b  c \overline d   \\
\nonumber   &  \qquad\qquad  +     (   | cb |^2    -  |c|^2  \lambda  -  |b|^2  \lambda   -  |d|^2   \lambda  +  \lambda^2)    I_n    \Big\}   .
\end{align*}     
From the above expression, one can see that $p_M  (\lambda)  = p_N (\lambda)$.  
Since  the two polynomials  are identical, they have the same roots. 
Hence, $M^* M$ and $N^* N$ have the same eigenvalues, and  $M$ and $N$ have the same singular values.  
\end{proof}

\begin{cor}
The map $\Gamma_n$ is an isometry. 
\end{cor}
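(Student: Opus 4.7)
The plan is to reduce the isometric property of $\Gamma_n$ to Theorem~\ref{thm:bc} by means of a single auxiliary observation: the operator norm of any complex matrix equals the operator norm of its transpose (since the eigenvalues of $M^*M$ and of $\bar M M^t$ coincide, being the common squared singular values). Theorem~\ref{thm:bc} swaps the two scalar blocks $b$ and $c$ while leaving the $(1,1)$-block fixed, whereas $\Gamma_n$ does the opposite, replacing $A$ by $A^t$ while leaving the scalar blocks fixed; the proof will interleave these two operations.

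Concretely, for a generic $M = \begin{pmatrix} A & b I_n \\ c I_n & d I_n \end{pmatrix} \in \spc T_n$, I would first take the transpose blockwise,
\[
 M^t = \begin{pmatrix} A^t & c I_n \\ b I_n & d I_n \end{pmatrix},
\]
and apply the fact above to get $\|M\| = \|M^t\|$. Then I would invoke Theorem~\ref{thm:bc} in the form where the $(1,1)$-block is $A^t$ and the roles of $b$ and $c$ are exchanged, concluding that
\[
 M^t = \begin{pmatrix} A^t & c I_n \\ b I_n & d I_n \end{pmatrix} \qand \Gamma_n(M) = \begin{pmatrix} A^t & b I_n \\ c I_n & d I_n \end{pmatrix}
\]
share the same singular values, hence the same operator norm. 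Chaining the two equalities yields $\|\Gamma_n(M)\| = \|M\|$ for every $M \in \spc T_n$, which is the desired isometric property.

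There is essentially no obstacle once the correct composition has been spotted, since Theorem~\ref{thm:bc} is formulated for arbitrary scalars $b,c,d \in \C$ and arbitrary $A \in M_n(\C)$, so the substitution $A \mapsto A^t$, $b \leftrightarrow c$ is legitimate. The only minor point to record is the blockwise transpose computation and the equality $\|M^t\| = \|M\|$; after that the corollary is a one-line consequence.
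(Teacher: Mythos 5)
Your proof is correct and is essentially the same as the paper's: both arguments combine Theorem~\ref{thm:bc} with the transpose-invariance of the operator norm, the only difference being that you transpose first and then swap the scalar blocks, while the paper swaps the scalar blocks via Theorem~\ref{thm:bc} first and then transposes.
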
 

\begin{proof} 
Let  $M  =   \begin{pmatrix}   A    &   b  I_n  \\   c  I_n  &  d   I_n   \end{pmatrix}$ be a generic element of $\spc T_n$.   
Theorem~\ref{thm:bc} implies that the norm of $M$ is equal to the norm of $N  = \begin{pmatrix}   A    &   c  I_n  \\   b  I_n  &  d   I_n   \end{pmatrix}$.  
In turn, the norm of $N$ is equal to the norm of $N^t   =  \begin{pmatrix}   A^t    &   b  I_n  \\   c I_n  &  d  I_n   \end{pmatrix} =  \Gamma_n (M)$.  
Hence, we have $\|  \Gamma_n  (  M) \|  =  \|  M\|$ for every $M \in \spc T_n'$. 
By restriction, $\Gamma_n$ is also an isometry.
\end{proof}

\begin{theo}
The map $\Gamma_n$ is a positive isometry on $\cl T_n$ which has no positive extension to the full matrix algebra $M_{2n}  (\C)$. 
\end{theo}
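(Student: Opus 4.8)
The plan is to argue by contradiction. Suppose $\Gamma_n$ extends to a positive map $\Psi_n$ on $M_{2n}(\C)$. Then $\Psi_n$ is automatically unital, because $\Gamma_n(I_{2n})=I_{2n}$, and $\ast$-preserving. My strategy is to pin down $\Psi_n$ completely on the two diagonal corners and on the scalar off-diagonal entries, and then to use positivity on rank-one inputs to force the off-diagonal corner map into a form that is incompatible with complex linearity. The decisive difference from Theorems~\ref{davidson-paulsen ex} and \ref{Phinunextendible} is that in $\spc T_n$ the off-diagonal entries are forced to be scalar multiples of $I_n$, so the rank-one matrix units $E_{ij}$ used there are \emph{not} available as test elements of the operator system; the argument must instead determine both corner maps outright. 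This is where I expect the real work to be.

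First I would identify the corners. The whole top-left corner $\{\begin{spmatrix} A&0\\0&0\end{spmatrix}:A\in M_n\}$ lies in $\spc T_n$, so $\Psi_n$ acts there exactly as $\Gamma_n$, namely $A\mapsto A^t$. For the bottom-right corner only the scalars $dI_n$ lie in $\spc T_n$; squeezing $0\le\Psi_n\begin{spmatrix}0&0\\0&D\end{spmatrix}\le\Psi_n\begin{spmatrix}0&0\\0&I_n\end{spmatrix}=\begin{spmatrix}0&0\\0&I_n\end{spmatrix}$ for $0\le D\le I_n$ shows $\Psi_n\begin{spmatrix}0&0\\0&D\end{spmatrix}=\begin{spmatrix}0&0\\0&\Lambda(D)\end{spmatrix}$ for a positive unital $\Lambda:M_n\to M_n$. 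To identify $\Lambda$, I would feed $\Psi_n$ the positive elements $\begin{spmatrix}A&I_n\\I_n&A^{-1}\end{spmatrix}$ with $A>0$: these have scalar off-diagonal, so they are computable, with image $\begin{spmatrix}A^t&I_n\\I_n&\Lambda(A^{-1})\end{spmatrix}$, and a Schur complement forces $\Lambda(A^{-1})\ge (A^t)^{-1}=\overline{A^{-1}}$. Letting $A^{-1}$ range over all positive matrices gives $\Lambda(D)\ge\overline D$ for all $D\ge0$; combined with unitality through $\Lambda(I_n-D)=I_n-\Lambda(D)$ this yields $\Lambda(D)\le\overline D$ as well, hence $\Lambda(D)=\overline D=D^t$. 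So $\Psi_n$ is the transpose on \emph{both} diagonal corners, while on scalar off-diagonals $\Psi_n\begin{spmatrix}0&\beta I_n\\0&0\end{spmatrix}=\begin{spmatrix}0&\beta I_n\\0&0\end{spmatrix}$.

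Next I would exploit rank-one inputs. For $v=(x,y)\in\C^n\oplus\C^n$ the projection $vv^*=\begin{spmatrix}xx^*&xy^*\\yx^*&yy^*\end{spmatrix}$ is positive, so $\Psi_n(vv^*)=\begin{spmatrix}\overline x\,\overline x^{\,*}& W\\ W^*&\overline y\,\overline y^{\,*}\end{spmatrix}\ge0$, where $W$ is the value of the $(1,2)$-corner map $\Psi_{12}$ at $xy^*$, and I have used $(xx^*)^t=\overline x\,\overline x^{\,*}$ together with the transpose action on the bottom-right corner. Since the two diagonal blocks are rank one with ranges $\C\overline x$ and $\C\overline y$, positivity of the block matrix forces $W\in\C\,\overline x\,\overline y^{\,*}$; equivalently $\Psi_{12}(Z)\in\C\,\overline Z$ for every rank-one $Z\in M_n$. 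On the other hand $\Psi_{12}$ is complex-linear and unital, with $\Psi_{12}(\beta I_n)=\beta I_n$.

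Finally I would derive a contradiction from these two properties whenever $n\ge2$. Testing $\Psi_{12}(Z)\in\C\,\overline Z$ on the real rank-one matrices $E_{11}$, $E_{22}$ and $(e_1+e_2)(e_1+e_2)^*$, together with unitality, forces $\Psi_{12}$ to coincide with the identity on the upper-left $2\times2$ corner $\Span\{E_{11},E_{12},E_{21},E_{22}\}$. But then the complex rank-one matrix $Z=(e_1+ie_2)(e_1+ie_2)^*$ gives $\Psi_{12}(Z)=Z$, whereas $Z$ and $\overline Z$ are not proportional, so $Z\notin\C\,\overline Z$. This contradiction shows that no positive extension of $\Gamma_n$ exists for $n\ge2$, which, combined with the already established positivity, isometry and unitality of $\Gamma_n$, proves the theorem. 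The crux is the rigidity established in the middle two steps, namely that any positive extension must act as the transpose on both corners and must send each rank-one $Z$ into $\C\,\overline Z$; once these are in hand the conclusion is elementary linear algebra.
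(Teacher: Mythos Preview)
Your first stage—identifying the bottom-right corner map $\Lambda$ as the transpose via the test elements $\begin{spmatrix}A&I_n\\I_n&A^{-1}\end{spmatrix}$—is correct and elegant. There is, however, a genuine gap in the rank-one step. When you write
\[
\Psi_n(vv^*)=\begin{pmatrix}\overline x\,\overline x^{\,*}& W\\ W^*&\overline y\,\overline y^{\,*}\end{pmatrix}
\]
you are tacitly assuming that $\Psi_n\begin{spmatrix}0&xy^*\\yx^*&0\end{spmatrix}$ contributes nothing to the diagonal blocks. You have shown only that $\Psi_n$ sends each \emph{diagonal} corner into itself; nothing yet forbids $\Psi_n$ of a purely off-diagonal matrix from having nonzero diagonal part, and without rank-one diagonal blocks in $\Psi_n(vv^*)$ you cannot conclude $W\in\C\,\overline{x}\,\overline{y}^{\,*}$. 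The gap is easily repaired: with $U=\begin{spmatrix}I_n&0\\0&-I_n\end{spmatrix}$ the average $\tfrac12\bigl(\Psi_n+U\Psi_n(U\,\cdot\,U)U\bigr)$ is again a positive unital extension of $\Gamma_n$ (conjugation by $U$ preserves $\spc T_n$ and commutes with $\Gamma_n$), and this averaged extension \emph{does} annihilate the diagonal of off-diagonal inputs. After that reduction your argument goes through verbatim.

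For comparison, the paper takes a rather different route. It applies the Kadison--Schwarz inequality $\Psi_n(M^2)\ge\Psi_n(M)^2$ to self-adjoint $M=\begin{spmatrix}A&\bar c I_n\\cI_n&dI_n\end{spmatrix}\in\spc T_n$; the only part of $M^2$ outside $\spc T_n$ is the off-diagonal piece $\begin{spmatrix}0&\bar cA\\cA&0\end{spmatrix}$, and the inequality (together with $c\mapsto -c$) forces $\Psi_n$ of this piece to equal $\begin{spmatrix}0&\bar cA^t\\cA^t&0\end{spmatrix}$. This determines the off-diagonal action outright and sidesteps the block-preservation issue entirely; combined with a determination of $\Lambda$ via the test element $\begin{spmatrix}D&D\\D&D\end{spmatrix}$, one concludes that $\Psi_n$ would have to be the global transpose on $M_{2n}(\C)$, which is not positive. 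Your approach, once patched with the averaging trick, has the merit of avoiding Kadison--Schwarz altogether and of reaching the contradiction by pure linear algebra.
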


\begin{proof}
The proof is by contradiction. 
We will show that, if  $\Gamma_n$ can be extended to a positive map  $\Psi_n$, then the map $\Psi_n$ must be
\begin{align*} 
 \Psi_n  \begin{pmatrix}   A    &   B \\  C   &   D  \end{pmatrix} 
 = \begin{pmatrix}   A^t    &   B^t \\  C^t   &   {D^t}   \end{pmatrix}. 
\end{align*} 
This is a contradiction, because this map is not positive.  

We start from the Kadison-Schwarz inequality for a positive  norm-one map    
\begin{align} \label{KS}
 \Psi_n (  M^2)  \ge   \big( \Psi_n (M) \big)^2    ,
\end{align} 
valid for an arbitrary self-adjoint matrix $M$ (see  \cite[Theorem 1.3.1]{stormer-book}). 
Let $M  =  \begin{pmatrix}   A  &  \overline c  I_n  \\ c I_n  &  d  I_n  \end{pmatrix}$, 
where $A^* =  A$ and $d \in \R$ be a generic self-adjoint element in $\spc T_n$.   
We have 
\begin{align*}
 M^2  & = \begin{pmatrix}   A^2 +  |c|^2 I_n &   \overline{c} (A  b  +  d  I_n)  \\   c( A + d  I_n) & ( |c|^2  +  |d|^2  )   I_n \end{pmatrix}  ,
\end{align*}
\begin{align*}
\nonumber \Psi_n (M^2 ) & 
=   \begin{pmatrix} 
  (A^t)^2  +  |c|^2 I_n  &  \overline c  d  I_n  \\  
  c  d  I_n & ( |b|^2  +  |d|^2  )   I_n  
  \end{pmatrix} 
  + \Psi_n  \begin{pmatrix}   0  &  \overline{c} A  \\   c A  &   0   \end{pmatrix} ,
\end{align*}
and  
\begin{align*}
 \big( \Psi_n (M) \big)^2 & =  
\begin{pmatrix}   
  (A^t)^2   +  |c|^2  I_n &   \overline{c} (A^t  b  +  d  I_n)  \\   
  c( A^t + d  I_n) & ( |c|^2  +  |d|^2  )   I_n    
\end{pmatrix} .
\end{align*}
Hence, Eq.~(\ref{KS})  implies 
\begin{align*}
  \Psi_n \begin{pmatrix}  0  &  \overline{c} A  \\   c A  &   0   \end{pmatrix}
  &\ge   \begin{pmatrix}  0  & \overline{c} A^t  \\   c A^t  &   0   \end{pmatrix} .
\end{align*}
If we replace $c$ with $-c$, we obtain 
\begin{align*}
 \Psi_n \begin{pmatrix}  0  &  -\overline{c} A  \\   -c A  &   0   \end{pmatrix}
  &\ge  \begin{pmatrix}  0  & -\overline{c} A^t  \\   -c A^t  &   0   \end{pmatrix} .
\end{align*}
for every self-adjoint matrix $A$.  Therefore
\begin{align*}
  \Psi_n \begin{pmatrix}  0  &  \overline{c} A  \\   c A  &   0   \end{pmatrix}
  &=   \begin{pmatrix}  0  & \overline{c} A^t  \\   c A^t  &   0   \end{pmatrix} .
\end{align*}
for every self-adjoint matrix $A$ and complex number $c$.  
Hence for an arbitrary element of $M_n(\C)$, written $C = A + iB$, where $A=A^*$ and $B=B^*$, 
\begin{align*} 
 \Psi_n  \begin{pmatrix}  0  &  C^*  \\  C &   0   \end{pmatrix}   
  &=      \begin{pmatrix}   0  &  A^t - iB^t  \\  A^t + iB^t  &   0   \end{pmatrix}
  =      \begin{pmatrix}   0  &  \overline C  \\  C^t  &   0   \end{pmatrix}
  \qquad \forall C  \in M_n  (\C) .
\end{align*}
Therefore
\begin{align*}
\nonumber  \Psi_n  \begin{pmatrix}  0  &  0  \\  C  &   0   \end{pmatrix}   
  &=  \frac12 \Psi_n \begin{pmatrix}  0  &  C^*  \\  C  &   0   \end{pmatrix} 
  + \frac i2 \Psi_n \begin{pmatrix}  0  &  i C^*  \\  -iC &   0   \end{pmatrix} \\&
  = \frac12 \begin{pmatrix}   0  &  \overline C  \\  C^t  &   0   \end{pmatrix}
  + \frac i2 \begin{pmatrix}   0  &  i \overline C   \\  -i C^t &   0   \end{pmatrix}
  = \begin{pmatrix}   0  &  0   \\  C^t  &   0   \end{pmatrix} .
\end{align*}
Similarly the same holds for the $1,2$ entry. Hence
\begin{align*} 
 \Psi_n \begin{pmatrix}   0  &  B   \\ C  &   0   \end{pmatrix}   
 &=  \begin{pmatrix}    0    &    B^t   \\ C^t &   0  \end{pmatrix} 
 \qquad \forall B , C  \in M_n  (\C) ,
\end{align*} 

Now, there are two ways to conclude the proof. 
One way is to notice that the restriction of the map $\Psi_n$ to the real operator system $\spc S_n$ defined in Eq.~(\ref{Sn}) 
coincides with the map $\Upsilon_n$ defined in Eq.~(\ref{Phin}). 
Since we already proved that $\Upsilon_n$ cannot be extended to a positive map on the whole algebra (Theorem ~\ref{Phinunextendible}), 
we conclude that the map $\Psi_n$ cannot exist.  

Another, more explicit way is as follows.  
Since $\Gamma_n$ is unital, so must be $\Psi_n$.  
Since $\Psi_n$ is positive and unital, we must have 
\begin{align*} 
\Psi_n \begin{pmatrix}  0   &  0  \\  0   &  D   \end{pmatrix} 
\nonumber  &\le     \Psi_n  \begin{pmatrix}  0    &  0  \\ 0   &  I_n  \end{pmatrix}   
\nonumber  =    \Gamma_n'  \begin{pmatrix}  0    &  0  \\   0   &  I_n  \end{pmatrix}   
  =  \begin{pmatrix}  0    &  0  \\   0   &  I_n   \end{pmatrix} 
\end{align*}
for every $D \in M_n(\bb C)$ such that $0 \le D \le I_n$.   
Hence, there is a positive unital map $\chi$ on $M_n(\bb C)$ so that
\begin{align*}
   \Psi_n  \begin{pmatrix}  0    &  0  \\  0   &  D   \end{pmatrix}
   = \begin{pmatrix}  0    &  0  \\   0   &  \chi (D)   \end{pmatrix}  , 
\end{align*}
for $D \in M_n (\C)$.
If $0 \le D \le I_n$,  
\begin{align*}
 0 \le \Psi_n  \begin{pmatrix}    D    &    D   \\   D  &   D   \end{pmatrix} 
  =  \begin{pmatrix}    D^t    &    D^t   \\   D^t   &   \chi (D)   \end{pmatrix} .
\end{align*}
The matrix on the right-hand side is positive if and only if $\chi  (D)  \ge D^t$.    
Applying this also to $I_n-  D$, we get 
\[
 I_n - \chi(D) = \chi(I_n-D) \ge I_n - D^t .  
\]
Therefore $\chi(D) = D^t$. By linearity, this holds for all $D \in M_n (\bb C)$. 
Summarizing, we obtain
\begin{align*}
 \Psi_n \begin{pmatrix}   A   &   B  \\   C   &  D   \end{pmatrix} 
 =  \begin{pmatrix}  A^t  &  B^t  \\  C^t  &  D^t   \end{pmatrix}     . 
\end{align*}
The above map is known to be not positive. For example, one has 
\begin{align*}
\Psi_n    \left(   \begin{array}{cc|cc}    
1  &  0  & 0  & 1  \\  
0  &  0  & 0  & 0  \\ \hline
0  &  0  & 0  & 0  \\
1  &  0  & 0  & 1  
\end{array}
 \right)  = 
 \left( \begin{array}{cc|cc}    
1  &  0  & 0  & 0  \\  
0  &  0  & 1  & 0  \\ \hline
0  &  1  & 0  & 0  \\
0  &  0  & 0  & 1  
\end{array}
 \right)  . 
\end{align*}
The matrix on the left is positive, and the matrix on the right is not. 
\end{proof}  

Notice that the complex field plays an essential role in this last example.  While the map $\Gamma_n $ cannot be extended to the whole algebra $ M_{2n}  (\C)$,  the restriction of $\Gamma_n$ to the real operator system   \begin{align*} 
\spc R_n &:= \left\{  \begin{pmatrix}   A  & b I_n  \\  c I_n &  d  I_n  \end{pmatrix} :   A\in M_n(\R)  ,  b ,  c , d  \in   \R  \right\}
\end{align*} 
admits a positive extension $\Psi_n$, defined on the whole matrix  algebra $M_{2n} (\R)$ as  
$\Psi_n  \begin{spmatrix}   A &  B \\   C  &  D  
\end{spmatrix}     =   \begin{spmatrix}   A^t &  B \\   C  &  D  
\end{spmatrix}$. 
\section*{Acknowledgments}  

GC  was supported by the Hong Kong Research Grant Council through grant 17300918  and through 
the Senior Research Fellowship Scheme via SRFS2021-7S02, and by the Croucher foundation. 
KRD and VIP were partially supported by the Natural Sciences and Engineering Research Council of Canada (NSERC).
MR is supported by the European Research Council (ERC Grant Agreement No. 851716).

\bibliographystyle{amsplain}

\end{document}